\let\origleft\left
\let\origright\right
\renewcommand\left{\mathopen{}\mathclose\bgroup\origleft}
\renewcommand\right{\aftergroup\egroup\origright}
\tikzset{->-/.style={decoration={
  markings,
  mark=at position #1 with {\arrow{>}}},postaction={decorate}}}
\declaretheorem{theorem}
\declaretheorem[numberwithin=section]{lemma}
\declaretheorem{proposition, corollary, remark, example}[
  style=plain,
  sibling=lemma
]
\crefname{figure}{Figure}{Figures}
\newcommand\shortcut[3]{%
  \expandafter\xdef\csname #1#2\endcsname{\noexpand#3{#2}}%
}
\let\eps\varepsilon
\newcommand\dd{\mathrm d}
\DeclareMathOperator\II{\mathds1}
\DeclareMathOperator\EE\dE
\DeclareMathOperator\PP\dP
\DeclareMathOperator\Tr{Tr}
\newcommand\RR{\dR}
\newcommand\CC{\dC}
\newcommand\MM{M_d}
\newcommand\rv{o}
\newcommand\RT[1][d]{\dT_{#1}}
\newcommand\RT>[1][d]{T_{#1}}
\newcommand\RTA[1][d]{\dA_{#1}}
\newcommand\RTA>[1][d]{A_{#1}}
\newcommand\RG[1][d,n]{\dG_{#1}}
\newcommand\RG>[1][d,n]{G_{#1}}
\newcommand\RGA[1][d,n]{\dA_{#1}}
\newcommand\RGA>[1][d,n]{A_{#1}}
\newcommand\NC{\mathrm{NC}}
\newcommand\ANC{\mathrm{ANC}}
\newcommand\gen{e}
\newcommand\KM{\mu_{\RT}}
\newcommand\CM{\mathbf{CM}_{d,n}}
\newcommand\CMA{\mathbf{A}_{d,n}}
\newcommand\Simple{{\cS_{d,n}}}
\newcommand\KM>{\mu_{\RT>}}
\newcommand\conv[1][n]{\mathop{\;\xrightarrow[#1\to\infty]{}\;}}
\newcommand\Dash{\mathrel{\rule[2.5pt]{1.5em}{.5pt}}}
\newcommand\ew{\varnothing}
\newcommand\ep{\emptyset}
\newcommand\finer{\preceq}
\newcommand\finer*{\prec}
\newcommand\GP{\cP}
\newcommand\skel[1][w]{\sigma_{#1}}
\newcommand\bad{\lhd}
\newcommand\mail[1]{\normalfont\href{mailto:#1}{\texttt{#1}}}
\title[Star moments of regular directed graphs and trees]{%
  A combinatorial view on star moments\\ 
  of regular directed graphs and trees}
\author[B.~Dadoun]{Benjamin Dadoun}
\address[BD]{Mathematics, Division of Science, New York University Abu Dhabi, UAE}%
\email{\mail{benjamin.dadoun@gmail.com}}%
\author[P.~O.~Santos]{Patrick Oliveira Santos}
\address[POS]{Univ Gustave Eiffel, Univ Paris Est Creteil, CNRS, LAMA UMR8050, F-77447 Marne-la-Vallée, France}
\email{\mail{patrick.oliveirasantos@u-pem.fr}}%
\begin{document}

\begin{abstract}
    We investigate the method of moments for
    $d$-regular digraphs and the limiting $d$-regular directed tree~$\RT>$
    as the number of vertices tends to infinity, in the same spirit as
    McKay~\cite{mckay1981expected} for the undirected setting.
    In particular, we provide
    a combinatorial derivation of the formula for the star moments
    (from a root vertex $\rv\in\RT>$)
    \[
      \MM(w)\qquad:=\sum_{\substack{v_0,v_1\ldots,v_{k-1},v_k\in\RT>\\v_0=v_k=\rv}}
      \RTA>^{w_1}(v_0,v_1)\RTA>^{w_2}(v_1,v_2)%
      \cdots%
      \RTA>^{w_k}(v_{k-1},v_k)
    \]
    with~$\RTA>$ the adjacency matrix of~$\RT>$,
    where $w:=w_1\cdots w_k$ is any word on the alphabet $\{1,{*}\}$
    and~$\RTA>^*$ is the adjoint matrix of~$\RTA>$.
    Our analysis highlights a connection between the
    non-zero summands of~$\MM(w)$ and the non-crossing partitions
    of $\{1,\ldots,k\}$ which are in some sense compatible with~$w$.
\end{abstract}

\maketitle
\thispagestyle{empty}

\section{Introduction}
Counting paths in graphs and other discrete structures
is a standard question with applications to
many areas of mathematics, see~%
\cite{kesten1959symmetric,bender1978asymptotic,wormald1980some,woess2000random}
or the book~\cite{bollobas1998random}.
In random matrix theory, this question is typically raised
when studying the convergence of empirical spectral distributions
(ESDs)
through the method of moments,
of which Wigner's original proof of the semicircular law~\cite{wigner1958}
is a renowned example. Essentially,
for random Hermitian matrices
$W_n:=(W_n(i,j))_{1\le i,j\le n}$
whose coefficients on and above the diagonal
are i.i.d.\ with mean~$0$ and variance~$1$
(so called Wigner matrices), the different summands
$\EE{W_n(i_1,i_2)\cdots W_n(i_k,i_1)}$
occurring in the expansion of the states
$\EE{\Tr W_n^k},\,k\ge1,$
can be related to certain cycles $i_1\to\cdots\to i_k\to i_1$
in a graph with vertex set $[n]:=\{1,\ldots,n\}$,
and understanding the combinatorics of these cycles
helps to determine how each of those summands contributes
to the $k$-th moment of the limiting spectral distribution
(the semicircle distribution).

In contrast, when~$\RGA\in{\{0,1\}}^{n\times n}$
is the adjacency matrix of a uniformly sampled
graph~$\RG$ with~$n$ vertices and constant degree~$d\ge2$
(that is, $\RG$ is a uniform $d$-regular graph on~$[n]$
and~$\RGA(i,j)=1$ if and only if $\{i,j\}$ is an edge
in~$\RG$),
McKay~\cite{mckay1981expected}
showed using the same method that the mean
ESD $\EE{\frac1n\sum_{i=1}^n\delta_{\lambda_i(\RGA)}}$
associated with~$\RGA$'s eigenvalues
$\lambda_1(\RGA),\ldots,\lambda_n(\RGA)$
converges weakly (and in moments)
towards a certain probability measure~$\KM$
which is now known as the Kesten--McKay distribution,
in the sense that
\begin{equation}
  \frac1n\EE{\sum_{i=1}^nf\bigl(\lambda_i(\RGA)\bigr)}
  \conv
  \int f(x)\,\KM(\dd x)\label{eq:km}
\end{equation}
holds for any polynomial or continuous bounded function~$f:\RR\to\RR$.
When $f(x):=x^k$ for some positive integer~$k$,
the left-hand side of~\eqref{eq:km},
which can also be written $\frac1n\EE{\Tr{\RGA^k}}$, coincides with
the expected total number of excursions of length~$k$
from a uniformly chosen vertex in~$\RG$, while the
right-hand side of~\eqref{eq:km} counts the number
of such excursions (from a fixed vertex)
in the (infinite) $d$-regular tree~$\RT$,
which is the Cayley graph of the free group with
presentation $\langle\gen_1,\ldots,\gen_d\mid\gen_i^2=1\rangle$,
see \cref{fig:3regtree}.
In fact, since the graphs~$\RG$ converge locally
to the tree~$\RT$ as $n\to\infty$ (i.e.,
with respect to the Benjamini--Schramm
topology~\cite{benjamini2011recurrence}),
the convergence~\eqref{eq:km} of their mean ESDs can be recovered
from Bordenave--Lelarge's criterion~\cite{bordenave2010resolvent}.

% Figures fig:3regulartree, fig:d2regulatree
\begin{figure}
\centering% 
\begin{subfigure}[c]{.5\textwidth}\centering%
\includegraphics{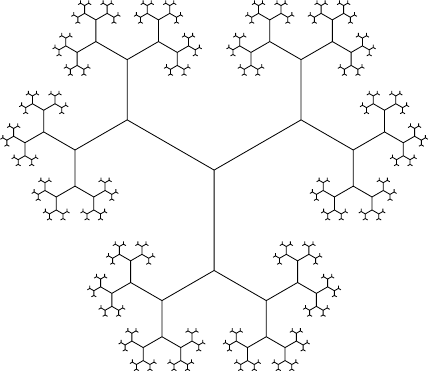}
\subcaption{}\label{fig:3regtree}
\end{subfigure}\hfill
\begin{subfigure}[c]{.5\textwidth}\centering%
\includegraphics{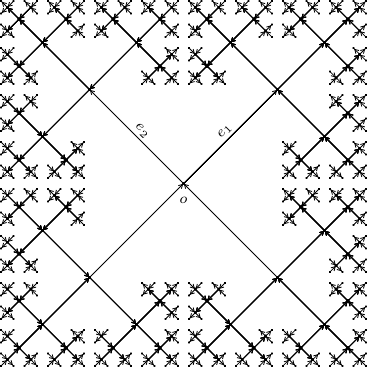}
\subcaption{}\label{fig:d2regtree}
\end{subfigure}%
\caption{(\textsc{a}) The undirected tree~$\RT[3]$
and (\textsc{b}) the directed tree~$\RT>[2]$.}
\end{figure}

In the present note, we adapt McKay's approach
to the asymmetric (i.e., oriented) case.
Although the local convergence as $n\to\infty$
of uniform $d$-regular digraphs~$\RG>$
towards the $d$-regular directed tree~$\RT>$
does hold in a similar fashion
(w.r.t.\ the ``oriented'' Benjamini--Schramm topology),
it does not imply the convergence of ESDs anymore,
and the analogue of~\eqref{eq:km} for oriented regular graphs
is still an open question, known as the
\emph{oriented Kesten--McKay conjecture}~\cite{bordenave2012around}:
the ESD of~$\RG>$ should converge towards
a probability distribution
on~$\CC$
corresponding in some sense to the spectral measure
of~$\RT>$.
One difficulty for this conjecture is that, because the
adjacency matrix~$\RGA>$ of~$\RG>$ is no longer
Hermitian, the tracial moments $\EE{\Tr{\RGA>^k}},\,k\ge0,$
do not continuously determine the (now complex-valued)
mean ESD of~$\RGA>$. In fact,
as~$\RGA>$ is not even normal, neither do the
star moments $\EE{\Tr{\RGA>^w}}$
defined for any bit string $w:=w_1\cdots w_k$
on the alphabet $\Sigma:=\{1,{*}\}$,
where $\RGA>^w:=\RGA>^{w_1}\cdots \RGA>^{w_k}$,
and~$\RGA>^*$ stands for the adjoint matrix
of~$\RGA>=:\RGA>^1$ (said differently,
$\RGA>^*$ is the adjacency matrix
of the graph~$\RG>^*$ obtained from~$\RG>$
by reversing each of its arcs).
Nonetheless, investigating the star moments of
regular digraphs remains interesting
from a combinatorial perspective,
and their convergence towards
the corresponding star moments of the
regular directed tree suggests that the conjecture holds true.

By definition, the $d$-regular directed tree~$\RT>$
is the unique infinite, connected, and acyclic digraph
in which every vertex has constant in- and out-degree~$d\ge2$.
In other words, $\RT>$ is the Cayley
graph of the free group
$F_d:=\langle\gen_1,\ldots,\gen_d\rangle$
where unlike its symmetric version,
the generators have no relations (see~\cref{fig:d2regtree}).
We identify the vertex set of~$\RT>$ with~$F_d$,
the root vertex $\rv\in\RT>$ corresponding to the identity element,
and we let~$\RTA>$ denote the adjacency matrix.
\begin{theorem}[Convergence of star moments
  for uniform $d$-regular digraphs]\label{thm:moment-method}
  For every $k\ge0$ and every $w\in\Sigma^k$,
  \[
    \frac1n\EE{\Tr{\RGA>^w}}
    \conv\MM(w):=\RTA>^w(\rv,\rv),
  \]
  where $\RTA>^w(v,v')$ is defined
  for any pair of vertices~$v,v'\in\RT>$ by
  \[
  \RTA>^w(v,v')\qquad:=%
  \sum_{\substack{v_0,v_1\ldots,v_{k-1},v_k\in\RT>\\%
  v_0=v,\ v_k=v'}}
    \RTA>^{w_1}(v_0,v_1)\RTA>^{w_2}(v_1,v_2)%
    \cdots%
    \RTA>^{w_k}(v_{k-1},v_k).
  \]
\end{theorem}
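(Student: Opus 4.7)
The plan is to adapt McKay's moment method to the directed setting. I would first expand
\[
  \Tr\bigl(\RGA>^w\bigr) = \sum_{\substack{v_0,\ldots,v_k\in[n]\\ v_0=v_k}} \RGA>^{w_1}(v_0,v_1)\cdots\RGA>^{w_k}(v_{k-1},v_k),
\]
take expectations term by term, and group the summands by their \emph{shape}: the partition~$\pi$ of $\{0,\ldots,k\}$ (with $0\sim k$) that records which $v_i$'s coincide. By vertex-exchangeability of the uniform $d$-regular digraph distribution, the expected value of each summand depends only on the pair~$(\pi,w)$.

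For each shape $(\pi,w)$, let $s:=|\pi|$ and let $r$ be the number of distinct oriented arcs used by any walk of that shape. The number of $(k+1)$-tuples in~$[n]^{k+1}$ realizing~$\pi$ is $(n)_s\sim n^s$, and a standard configuration-model / switching estimate shows that the probability that all $r$ prescribed arcs lie in~$\RG>$ is asymptotic to $(d/n)^r$. Hence $(\pi,w)$ contributes an amount of order $n^{s-r-1}$ to $\frac1n\EE{\Tr\bigl(\RGA>^w\bigr)}$. Since the underlying (multi)graph of any walk is connected with $s$ vertices, one has $r\ge s-1$, with equality iff the shape is \emph{tree-like}; all non-tree shapes therefore contribute $o(1)$ and may be discarded in the limit.

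It remains to identify the sum over tree shapes with $\RTA>^w(\rv,\rv)=\MM(w)$. A tree-shaped closed walk in~$\RG>$ lifts uniquely, through the universal cover of~$\RG>$, to a closed walk from~$\rv$ in~$\RT>$; conversely every closed walk in~$\RT>$ is tree-shaped and induces a unique shape $(\pi,w)$, with step directions respecting~$w$. The limiting factor $d^{s-1}=\lim n^{s-1}(d/n)^{s-1}$ exactly matches the number of choices of neighbour (outgoing or incoming, as dictated by the corresponding letter of~$w$) at each newly visited vertex of such a lift, so summing over tree shapes reproduces precisely~$\MM(w)$. The principal obstacle is the step-two probability estimate: showing that $\PP[\text{prescribed $r$ arcs all present in }\RG>]=(d/n)^r(1+o(1))$ with an error uniform over the $O_k(1)$ relevant shapes is where the combinatorial work really lies. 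This can be sidestepped by working in the permutation model $\RGA>=\sum_{i=1}^dP_i$ for independent uniform permutations~$P_i$, where $\frac1n\EE{\Tr\bigl(\RGA>^w\bigr)}$ decomposes into a sum over $(i_1,\ldots,i_k)\in[d]^k$ of normalized traces of products of permutation matrices that can be handled by classical cycle-counting arguments.
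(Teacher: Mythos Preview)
Your route---expand the trace, group closed walks by their vertex-coincidence shape, and isolate the tree-like shapes---is genuinely different from what the paper does. The paper argues structurally: it first proves a deterministic criterion (\cref{thm:criterion}) saying that if a sequence of $d$-regular digraphs has $o(|V_n|)$ cycles of each fixed length, then all but $o(|V_n|)$ vertices~$v$ have a radius-$k$ neighbourhood isomorphic to the ball around~$\rv$ in~$\RT>$, so $A_n^w(v,v)=\RTA>^w(\rv,\rv)$ holds \emph{exactly} for those~$v$ and the normalised trace converges. The short-cycle hypothesis is then verified in expectation for~$\RG>$ via the configuration model, and a Borel--Cantelli subsequence plus dominated convergence finishes the job. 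No arc-inclusion probabilities are ever estimated.

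Your plan, however, contains a concrete error rather than merely a gap. The assertion that $\PP[\text{$r$ prescribed arcs all present in }\RG>]=(d/n)^r(1+o(1))$ is \emph{false} whenever two of the arcs share a tail or share a head, which happens for most tree shapes. For $w=1{*}1{*}$ and the shape $v_0=v_2=v_4$ with $v_1,v_3$ distinct ($s=3$, $r=2$), the two arcs $(v_0,v_1),(v_0,v_3)$ are both out-arcs of~$v_0$; the correct probability is $\sim d(d-1)/n^2$, not $d^2/n^2$. Correspondingly, the claim that each tree shape contributes $d^{s-1}$ and that this equals the number of lifts to~$\RT>$ is also wrong: the number of $w$-paths in~$\RT>$ realising a given tree shape is a product of factors $d$ and $d-1$ (one loses a choice whenever the new step would revisit an already-seen neighbour in the relevant direction). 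With your formulas the tree-shape sum for $w=1{*}1{*}$ is $d^2+d^2+d=2d^2+d$, whereas $\MM(1{*}1{*})=2d^2-d$. The strategy is repairable---the \emph{correct} shape-by-shape probability does match the \emph{correct} $\RT>$-count---but the asymptotic you wrote down is the crux and it is not $(d/n)^r$. Finally, your proposed sidestep via $\sum_{i=1}^dP_i$ gives the adjacency matrix of the configuration model, not of~$\RG>$, so you still owe a conditioning-on-simplicity step; and the computation of the normalised traces $\frac1n\EE\Tr(P_{i_1}^{w_1}\cdots P_{i_k}^{w_k})$ is exactly the alternative proof the paper records in \cref{sec:alternativeproof} via Nica's asymptotic freeness of random permutation matrices.
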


\noindent%
Note that $\RTA>^w(v,v')=\II_{\{v=v'\}}$
if $w=\ew\in\Sigma^0$
is the empty word.
In any case,
all summands of~$\RTA>(v,v')$ are either~$0$ or~$1$,
and each non-zero summand corresponds to a
solution $(i_1,\ldots,i_k)\in{[d]}^k$ to the
word problem
\[v\cdot\gen_{i_1}^{w_1}\cdots\gen_{i_k}^{w_k}=v'\]
in the free group~$F_d$, where~$\gen_i^{*\vphantom1}$
denotes the inverse of~$\gen_i^{\vphantom1}=:\gen_i^1$.
We call such a solution $(i_1,\ldots,i_k)$ a $w$-path
from~$v$ to~$v'$,
which we can also picture as
\begin{align*}
  v&=:v_0\stackrel{w_1}\Dash v_1\stackrel{w_2}\Dash\cdots
  \stackrel{w_{k-1}}\Dash v_{k-1}\stackrel{w_k}\Dash v_k:=v',
\end{align*}
where $v_j:=v\cdot\gen_{i_1}^{w_1}\cdots\gen_{i_j}^{w_j}$
for all $0\le j\le k$.
In plain words, $\MM(w)$ is the cardinal of the set~$\GP(w)$ of all
$w$-paths from~$\rv$ to~$\rv$ (or from any other vertex to itself,
by transitivity of the Cayley graph~$\RT>$).
We stress that we do not consider any randomness on~$\RT>$:
in this respect, our purpose is different from
Kesten~\cite{kesten1959symmetric},
who studied spectral properties of random walks
on the undirected regular tree~$\RT$.

In fact, \cref{thm:moment-method}
is a consequence of the following
general criterion,
similar to~\cite[Theorem~1.1]{mckay1981expected}:
under a growth assumption on the number of
short cycles, we show that the star moments of
deterministic $d$-regular digraphs
converge
to the star moments of the
$d$-regular directed tree~$\RT>$.
\begin{theorem}[Convergence of star moments
  for deterministic $d$-regular digraphs]\label{thm:criterion}
  Let~$G_n,\,n\ge1,$ be a $d$-regular digraph
  with adjacency matrix~$A_n$
  on a vertex set~$V_n$%,
  %with $|V_n|\to\infty$ as $n\to\infty$
  .
  Let $k\ge1$ and
  suppose that for every $j\in[k]$, the number~$c_j(G_n)$
  of cycles with length~$j$ in~$G_n$
  (see~\eqref{eq:nbcycles})
  fulfills
  \begin{align}
    \frac{c_j(G_n)}{|V_n|}&\conv0.\label{eq:fewcycles}
  \intertext{Then for every word~$w\in\Sigma^k$,}
    \frac1{|V_n|}\Tr{A_n^w}&\conv\MM(w).
  \end{align}
\end{theorem}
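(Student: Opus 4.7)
The plan is a locality argument based on the universal cover: any closed $w$-walk from $v\in V_n$ lifts to a walk in the $d$-regular directed tree~$\RT>$, and the lift is again closed precisely when the walk in~$G_n$ is contractible. When no short non-contractible closed walk passes through~$v$, the count $A_n^w(v,v)$ should therefore coincide with $\MM(w)$.

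To implement this I would first write
\[
  \Tr A_n^w \;=\; \sum_{v\in V_n} A_n^w(v,v),
\]
where each summand counts closed $w$-walks of length~$k$ from~$v$ in~$G_n$. Fixing an arbitrary labeling of the $d$ out-arcs (resp.\ in-arcs) at each vertex of~$G_n$ by the generators $\gen_1,\dots,\gen_d$ of~$F_d$ yields a covering map $\RT>\to G_n$. Any $w$-walk from~$v$ in~$G_n$ then admits a unique lift starting at a prescribed vertex~$\tilde v$ above~$v$, and the closed $w$-walks from~$v$ in~$G_n$ are in bijection with $w$-walks from~$\tilde v$ in~$\RT>$ that end somewhere in the fiber above~$v$.

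Call $v$ \emph{good} if every non-trivial closed walk from~$v$ in~$G_n$ has length strictly greater than~$k$. For good~$v$, the only vertex in the fiber above~$v$ reachable from~$\tilde v$ in at most~$k$ steps is $\tilde v$ itself, so closed $w$-walks at~$v$ biject with $w$-paths in~$\RT>$ from~$\rv$ to~$\rv$, giving $A_n^w(v,v)=\MM(w)$. Any non-good vertex~$v$ lies on a non-backtracking closed walk of length $\le k$ in~$G_n$, and hence on a cycle of length $j\le k$. Each such cycle contains at most $j$ vertices, so the number of bad vertices is bounded by $\sum_{j=1}^{k} j\,c_j(G_n)$, which is $o(|V_n|)$ by~\eqref{eq:fewcycles}.

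Since at each step of a $w$-walk there are exactly $d$ admissible choices for the next vertex, the crude bound $A_n^w(v,v)\le d^k$ holds for every~$v$; the bad vertices therefore contribute $o(1)$ to $\tfrac{1}{|V_n|}\Tr A_n^w$, while the good ones contribute $(1+o(1))\MM(w)$, yielding the claim. The main obstacle is the careful bookkeeping between non-contractibility of short closed walks and the graph-theoretic count $c_j(G_n)$ of~\eqref{eq:nbcycles}; a minor technical point is the construction of the universal covering map $\RT>\to G_n$ via an explicit edge labeling of~$G_n$.
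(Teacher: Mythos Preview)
Your locality argument is the same as the paper's in spirit, but there is a real gap in your count of bad vertices: it is not true that a vertex $v$ admitting a short non-contractible closed walk must itself lie on a plain cycle of length $\le k$. Suppose $v$ has an out-arc to some $u$ sitting on a $2$-cycle $u\to u'\to u$, while $v$ itself is not on any short cycle. The closed walk $v,u,u',u,v$ (word $w=111{*}$, the last step reversing the arc $(v,u)$) is non-contractible of length~$4$, so $v$ is bad for $k\ge4$; yet every closed walk of length $\le4$ through $v$ must traverse the arc $(v,u)$ twice, so no plain cycle in the sense of~\eqref{eq:nbcycles} passes through~$v$. What does hold is that a bad $v$ lies within distance $k$ of a short plain cycle: cyclically reduce the offending walk and extract a simple cycle from it, all of whose vertices were visited by the original walk and hence lie at distance at most $k$ from~$v$. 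Consequently the number of bad vertices is bounded by $(2d)^k\sum_{j\le k}j\,c_j(G_n)$ rather than $\sum_{j\le k}j\,c_j(G_n)$, which is still $o(|V_n|)$. The paper's proof builds this correction in from the outset by declaring bad every vertex within distance $k$ of a short cycle, and then observing for good $v$ that the ball $B_{G_n}(v,k)$ is isomorphic to $B_{\RT>}(\rv,k)$, giving $A_n^w(v,v)=\MM(w)$ directly, without any covering map.

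A secondary point: arbitrary independent labelings of out-arcs and in-arcs do \emph{not} produce a covering $\RT>\to G_n$, because following $e_ie_i^{-1}$ need not return to the starting vertex when the out-label and in-label of a given arc disagree. What you need is a single labeling of the arcs by $[d]$ such that at every vertex both the $d$ out-arcs and the $d$ in-arcs receive each label exactly once, i.e., a decomposition of $A_n$ as a sum of $d$ permutation matrices; this exists (K\"onig's theorem applied to the bipartite double cover) but is not arbitrary.
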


Our last result is a combinatorial derivation of
a formula for~$\MM(w)$, which requires
some notation.
Recall that a partition~$\pi$ of~$[k]$
can also be seen as the equivalence relation~$\sim_\pi$
on~$[k]$ such that
$i\sim_\pi j\iff\exists V\in\pi,\ \{i,j\}\subseteq V$
for all $i,j\in[k]$.
We say that~$\pi$ is \emph{non-crossing},
written~$\pi\in\NC(k)$,
if
$i_1\sim_\pi j_1,\ i_2\sim_\pi j_2
\implies j_1\sim_\pi i_2$
for all $1\le i_1<i_2<j_1<j_2\le k$.
The cardinal~$|\NC(k)|$ is equal to the ubiquitous
Catalan number $C_k:=\frac1{k+1}\binom{2k}k$,
which is also~\cite{stanley2015catalan} the cardinal~$|\NC_2(2k)|$
of the non-crossing \emph{pair} partitions
of~$[2k]$ (where each block has size~$2$).
We further say that~$\pi$
is an \emph{alternating non-crossing partition} of~$w$
($\pi\in\ANC(w)$) if for
every block $V:=\{i_1<\ldots<i_m\}\in\pi$,
the subword~$w|_V:=w_{i_1}\cdots w_{i_m}$ of~$w$
is \emph{alternating},
that is either of the form $w|_V=1{*}\cdots1{*}$ or $w|_V={*}1\cdots{*}1$
(so~$w$ and all blocks of~$\pi$ must have even size).%
\begin{theorem}[Combinatorial formula for~$\MM(w)$]\label{thm:starmoments}
  For every $k\ge0$ and every $w\in\Sigma^k$,
  \[
    \MM(w)\:\:\:\:=\sum_{\pi\in\ANC(w)}
      \left(\prod_{V\in\pi}{(-1)}^{\frac{|V|}2-1}\,%
      C_{\frac{|V|}2-1}\right)d^{|\pi|}.
  \]
\end{theorem}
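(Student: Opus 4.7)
The plan is to express $\MM(w)=|\GP(w)|$ as a sum over colorings $(i_1,\ldots,i_k)\in[d]^k$ and reorganize it, via a signed moment--cumulant identity on the lattice $\NC(k)$, into the announced formula. Since every non-zero summand of $\RTA>^w(\rv,\rv)$ is a $w$-path, I would write
\[
  \MM(w)=\sum_{(i_1,\ldots,i_k)\in[d]^k}\II\bigl[(i_1,\ldots,i_k)\in\GP(w)\bigr],
\]
and group the summands by the \emph{color partition} $\sigma=\sigma(i_1,\ldots,i_k)$ of $[k]$ defined by $j\sim_{\sigma}j'\iff i_j=i_{j'}$.

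The key step is the combinatorial identity
\[
  \II\bigl[(i_1,\ldots,i_k)\in\GP(w)\bigr]=\sum_{\substack{\pi\in\NC(k)\\ \pi\finer\sigma}}\prod_{V\in\pi}\kappa(w|_V),
\]
where $\kappa(v):=(-1)^{|v|/2-1}C_{|v|/2-1}$ if $v$ is alternating of even length, and $\kappa(v):=0$ otherwise. I would prove this by induction on $k$, exploiting the free-product structure $F_d=\ZZ^{*d}$. In the inductive step one examines the block $V\in\pi$ containing position~$1$: non-crossingness forces each gap between consecutive elements of $V$ to carry its own sub-partition, and by the inductive hypothesis each sub-partition accounts for a shorter $w$-path on that gap. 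The overcount coming from the fact that several non-crossing cancellation schemes can witness the same trivial word in $F_d$ is compensated precisely by the Catalan-signed weights $\kappa$, via the Catalan recursion $C_m=\sum_{\ell=0}^{m-1}C_\ell C_{m-1-\ell}$; the alternating condition on $w|_V$ reflects that successive cancellations inside $V$ require opposite exponents at each level of nesting.

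Once this local identity is established, exchanging the two sums yields
\[
  \MM(w)=\sum_{\pi\in\NC(k)}\prod_{V\in\pi}\kappa(w|_V)\,\cdot\,\#\bigl\{(i_1,\ldots,i_k)\in[d]^k:\pi\finer\sigma(i_1,\ldots,i_k)\bigr\},
\]
in which the count of $\pi$-monochromatic colorings is $d^{|\pi|}$ (one free color per block), while $\prod_V\kappa(w|_V)$ vanishes unless every block $V\in\pi$ has alternating $w|_V$, i.e.\ unless $\pi\in\ANC(w)$. This produces the stated formula.

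The main obstacle is the local identity: for a given trivial word in $F_d$, multiple non-crossing pair cancellation schemes may coexist, and the alternating signs in $\kappa$ must kill the resulting overcount exactly. This is the combinatorial incarnation of the moment--cumulant relation for free Haar unitaries, and the careful matching of each gap-induced sub-problem with the Catalan recurrence -- ensuring the signs telescope rather than accumulate -- is where the technical core of the argument lies.
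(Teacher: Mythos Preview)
Your route is valid but genuinely different from the paper's direct proof. The paper does \emph{not} start from the color partition~$\sigma$ of a tuple $(i_1,\ldots,i_k)$; instead it assigns to each $w$-path a \emph{skeleton} $\skel(\mathbf p)\in\ANC_2(w)$ (an alternating non-crossing \emph{pair} partition) recording first-return times, proves that $|\skel^{-1}\{\pi\}|=\prod_{V\in\pi}(d-\II_{\{V\in B(\pi)\}})$ where $B(\pi)$ is a set of ``bad blocks,'' then expands the product and merges bad blocks into their parents to coarsen each $(\pi,A)$ with $A\subseteq B(\pi)$ into some $\pi'\in\ANC(w)$. The Catalan numbers then arise as $|\NC_2(2m-2)|$, counting the pair-refinements of each block of~$\pi'$. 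Your proposal, by contrast, posits the local identity $\II[(i_1,\ldots,i_k)\in\GP(w)]=\sum_{\pi\in\NC(k),\,\pi\finer\sigma}\prod_V\kappa(w|_V)$ and deduces the theorem by summing over colorings. This identity is exactly the moment--cumulant formula for free Haar unitaries specialized to $L(F_d)$, which is precisely the mechanism behind the paper's \emph{alternative} derivation in Section~3 (via Nica's theorem and the known $R$-transform of a Haar unitary); you are proposing to prove that identity combinatorially rather than import it.

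What each approach buys: the paper's skeleton/bad-block argument explains \emph{why} signed Catalan numbers appear---they emerge from an inclusion--exclusion over premature returns and the count of pair-refinements of a block---whereas your approach takes the weights $\kappa$ as given and shifts all the work into the local identity. Be aware that the inductive proof you sketch is the crux and is not light: after peeling off the block $V\ni1$ and applying the inductive hypothesis on each gap, you must still show that $\sum_{V}\kappa(w|_V)\prod_{\text{gaps}}\II[\text{gap-word trivial}]$ equals $\II[\text{full word trivial}]$, which amounts to re-deriving the free cumulants of a single Haar unitary from scratch (this is where the Catalan recursion and the alternating constraint must be made to interact precisely). Your proposal is correct in outline, but the paper's route avoids this step entirely by working with pair partitions first and coarsening afterwards.
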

\noindent
Notably, our proof shows that the $w$-paths may be counted
by an inclusion-exclusion principle involving
non-crossing pair partitions, thus explaining the
presence of signs and Catalan numbers.

We mention that \cref{thm:moment-method,thm:starmoments}
may be recovered
by taking a detour to free probability
from a theorem of Nica~\cite{nica1993asymptotically},
see \cref{sec:alternativeproof},
where we also put the
oriented Kesten--McKay conjecture
in more context.
Our main motivation for this work is to provide
direct combinatorial proofs, which we do in \cref{sec:directproof}.

\paragraph*{\bfseries Acknowledgments.}
We would like to thank Pierre Youssef
for suggesting to work on this problem and Charles Bordenave
for informing us about~\cite{nica1993asymptotically}.

\section{Direct combinatorial proofs}\label{sec:directproof}
\subsection{Convergence of star moments.}
In this section, we prove \cref{thm:criterion}
and its corollary, \cref{thm:moment-method}.
Let~$G$ be a multigraph with adjacency matrix~$A$
and vertex set~$V$. We call a sequence of
$j\ge1$ distinct arcs
$\eps_1,\ldots,\eps_j$
(read in any cyclic order)
a \emph{plain cycle of length~$j$ in~$G$}
if each of the pairs
$\{\eps_1,\eps_2\},\ldots,\{\eps_{j-1},\eps_j\},\{\eps_j,\eps_1\}$
has a common vertex
(we disregard the arc orientations).
Discounting the cyclic orderings,
the number of plain cycles with length~$j$
in~$G$ is then given by
\begin{equation}
  c_j(G):=\frac1{2j}\sum_{w\in\Sigma^j}\sum_{\mathbf v}
    A^{w_1}(v_0,v_1)\cdots A^{w_j}(v_{j-1},v_j),
  \label{eq:nbcycles}
\end{equation}
the second summation ranging over every
$\mathbf v:=(v_0,\ldots,v_{j-1},v_j=v_0)\in V^{j+1}$
such that the sequence
${\bigl({(v_{i-1},v_i)}^{w_i}\bigr)}_{1\le i\le j}$
is injective, where ${(v,v')}^1:=(v,v')$ and ${(v,v')}^*:=(v',v)$
for all $v,v'\in V$.
\begin{proof}[Proof of \cref*{thm:criterion}]
  Write $v\in\cC_{n,k}$ if there exists a
  vertex~$v'\in V_n$ at distance at most~$k$
  from~$v$ (i.e., $A_n^{w'}(v,v')>0$ for some $w'\in\Sigma^j$,
  with $j\le k$)
  and belonging to a cycle of length
  at most~$k$ in~$G_n$.
  Note that, by union bound,
  \[
    |\cC_{n,k}|\le\sum_{i=1}^k{(2d)}^i\sum_{j=1}^kj\,c_j(G_n)
    \le k^2{(2d)}^k\sum_{j=1}^kc_j(G_n).
  \]
  Thus, on the one hand,
  \begin{align}
    \frac1{|V_n|}\sum_{v\in\cC_{n,k}}A^w_n(v,v)
    \le k^2{(2d^2)}^k\,\frac{\sum_{j=1}^kc_j(G_n)}{|V_n|}
    &\conv0,\label{eq:trbadpart}
  \intertext{using the trivial upper bound~$A_n^w(v,v)\le d^k$
  and \eqref{eq:fewcycles}.
  On the other hand, for $v\notin\cC_{n,k}$, no
  vertex accessible in at most~$k$ steps from~$v$
  belongs to a cycle of length at most~$k$. Since~$G_n$
  is $d$-regular, the ball $B_{G_n}(v,k)$ of radius~$k$ around~$v$
  must then look exactly like the ball~$B_{\RT>}(\rv,k)$.
  In particular, $A_n^w(v,v)=\RTA>^w(\rv,\rv)$ for all $v\in V_n\setminus\cC_{n,k}$,
  and thus}
    \frac1{|V_n|}\sum_{v\not\in\cC_{n,k}}A^w_n(v,v)
    =\frac{|V_n|-|\cC_{n,k}|}{|V_n|}\,\RTA>^w(\rv,\rv)
    &\conv
    \MM(w).\label{eq:trgoodpart}
  \end{align}
  Adding~\cref{eq:trbadpart,eq:trgoodpart} then shows
  as stated that
  \[
    \frac1{|V_n|}\Tr{A_n^w}
    =\frac1{|V_n|}\sum_{v\in\cC_{n,k}}A^w_n(v,v)+
    \frac1{|V_n|}\sum_{v\notin\cC_{n,k}}A^w_n(v,v)
    \conv
    \MM(w).\qedhere
  \]
\end{proof}
\begin{remark}
  As we can see from the proof, the condition~\eqref{eq:fewcycles}
  implies more generally that
  $G_n\to\RT>$ with respect to the ``oriented''
  Benjamini--Schramm topology: for every $k\ge1$,
  the balls of radius~$k$ in~$G_n$ are eventually isomorphic
  to the ball of radius~$k$ in~$\RT>$. As such, \cref{thm:criterion}
  constitutes the non-symmetric version
  of~\cite[Proposition~14]{abert2016measurable}.
\end{remark}

Next, we show that the growth condition~\eqref{eq:fewcycles}
of~\cref{thm:criterion} holds
in expectation for the uniform $d$-regular digraph~$\RG>$.
\begin{lemma}[$\RG>$ has few short cycles on average]\label{lem:fewcycles}
  For every $k\ge1$,
  \[
    \frac1n\EE{c_k\left(\RG>\right)}\conv0.
  \]
\end{lemma}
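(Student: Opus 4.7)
The plan is a standard first moment calculation starting from~\eqref{eq:nbcycles}. Taking expectations and exchanging sums,
\[
  \EE{c_k(\RG>)} \;=\; \frac1{2k}\sum_{w\in\Sigma^k}\sum_{\mathbf v}
  \PP{\RGA>^{w_1}(v_0,v_1)\cdots\RGA>^{w_k}(v_{k-1},v_k)=1},
\]
with $\mathbf v=(v_0,\ldots,v_{k-1},v_k=v_0)$ running over the tuples for which the prescribed sequence of arcs is injective. Since there are $2^k$ words $w\in\Sigma^k$ and at most $n^k$ admissible tuples $\mathbf v$, the problem reduces to proving a uniform bound of order ${(d/n)}^k$ on the probability that $k$ specified distinct arcs occur simultaneously in~$\RG>$.

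To get this bound, I would invoke the configuration model for $d$-regular digraphs: attach $d$ labelled out-stubs and $d$ labelled in-stubs to every vertex, then pair the $dn$ out-stubs uniformly at random with the $dn$ in-stubs. Conditioning the resulting multi-digraph on being simple (no loops, no multiple arcs) produces a sample of~$\RG>$, and the probability of simplicity converges to a positive constant as $n\to\infty$, in analogy with Bollob\'as's classical calculation for the undirected case (see~\cite{bollobas1998random}). It therefore suffices to bound arc probabilities in the unconditional configuration model. For $k$ prescribed distinct arcs, the number of configurations realizing all of them can be upper-bounded by first choosing, for each arc, a distinct out-stub at its tail and a distinct in-stub at its head (at most $d^k\cdot d^k$ choices in total, using falling-factorial counts at shared endpoints), and then freely pairing the remaining $dn-k$ out-stubs with the remaining in-stubs in $(dn-k)!$ ways. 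Dividing by the total $(dn)!$ pairings yields an upper bound
\[
  \frac{d^{2k}(dn-k)!}{(dn)!}
  \;=\;\frac{d^{2k}}{\prod_{i=0}^{k-1}(dn-i)}\;=\;O\bigl({(d/n)}^k\bigr).
\]

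Combining these two ingredients,
\[
  \EE{c_k(\RG>)} \;\le\; \frac1{2k}\cdot 2^k\cdot n^k\cdot O\bigl({(d/n)}^k\bigr)
  \;=\;O\bigl({(2d)}^k\bigr),
\]
which is bounded uniformly in~$n$; dividing by~$n$ then gives the claimed convergence. The main technical obstacle is really the transfer from the configuration model to the uniform model, i.e.\ verifying that the numbers of loops and multi-arcs in the configuration model jointly converge to a distribution putting positive mass on zero. Everything else is elementary counting and is insensitive to whether the vertex tuples $\mathbf v$ have repeated entries, as long as the associated $k$ arcs are distinct (which is precisely the injectivity condition from~\eqref{eq:nbcycles}).
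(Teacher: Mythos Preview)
Your argument is correct and mirrors the paper's proof almost exactly: both pass to the configuration model, bound $\EE{c_k(\CM)}$ by a quantity of order ${(2d)}^k/k$ via the elementary count ${(dn-k)!}/{(dn)!}$ for $k$ prescribed half-arc matchings, and then transfer to~$\RG>$ by conditioning on simplicity. The only cosmetic difference is that the paper organizes the stub count as a single factor~$d^k$ (yielding $\sim 2^{k-1}/k$) rather than your $d^{2k}$, and cites \cite{janson2009probability,cooper2004size} for the simplicity probability in the directed setting rather than the undirected analogue in~\cite{bollobas1998random}.
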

\begin{proof}
  To estimate this expectation, it is convenient to work with the
  so called \emph{configuration model}~$\CM$, whose construction
  we briefly recall. First, to each vertex $i\in[n]$ we attach~$d$
  unique incoming half-arcs $\eps^+_{i+(p-1)n},\,p\in[d],$
  and another~$d$ unique outgoing half-arcs
  $\eps^-_{i+(q-1)n},\,q\in[d]$.
  Second, we choose uniformly at random
  a bijection~$f_{d,n}$ joining each
  of the~$nd$ outgoing half-arcs to one of the~$nd$
  incoming half-arcs
  (so there are~${(nd)}!$
  possible choices for the bijection~$f_{d,n}$).
  This gives rise to a random
  multigraph~$\CM$ on~$[n]$
  in which the number of arcs $\CMA(i,j)$
  from~$i$ to~$j$
  equals the number of pairs $(p,q)\in[d]^2$ such that
  $f\bigl(\eps^+_{i+(p-1)n}\bigr)=\eps^-_{j+(q-1)n}$.
  Also, the distribution of~$\CM$ conditional on
  the event
  \[
    \Simple:\text{``$\CM$ is simple''}
    =\Bigl\{\text{$\CMA(i,i)=0$ and $\CMA(i,j)\le1$ for all $i\neq j\in[n]$}\Bigr\}
  \]
  coincides with the law of~$\RG>$:
  $\cL(\RG>)
    =\cL(\CM\mid\Simple)$.
  Now,
  the expected number~$\EE{c_k\bigl(\CM\bigr)%
  }$
  of cycles with length~$k$
  is easy to estimate from~\eqref{eq:nbcycles}%
  :
  \begin{align*}
    \EE{c_k\left(\CM\right)
    }
    &\le\frac1{2k}\cdot2^k\cdot n^k\cdot
    d^k\PP\left(f_{d,n}(\eps^+_1)=\eps^-_2,
    \ldots,f_{d,n}(\eps^+_{k-1})=\eps^-_k,f_{d,n}(\eps^+_k)=\eps^-_1\right)\\
    &=\frac{{(2nd)}^k\,{(nd-k)}!}{2k\,{(nd)}!}\\
    &\sim\frac{2^{k-1}}k,
  \end{align*}
  by Stirling's formula.
  Furthermore, the probability~$\PP(\Simple)$
  of~$\CM$ being simple was computed in~\cite{janson2009probability}
  and is known~\cite{cooper2004size} to be bounded away from zero
  as $n\to\infty$.
  Hence
  \[\frac1n\EE{c_k\left(\RG>\right)}
  =\frac1n\EE\left[{c_k\left(\CM\right)}\;\middle|\;\Simple\right]
  \le\frac{\EE{c_k\left(\CM\right)%
  }}{n\PP\left(\Simple\right)}
  \,\xrightarrow[n\to\infty]{}\;0,\]
  which concludes the proof.
\end{proof}
We are now ready to prove \cref{thm:moment-method}.
\begin{proof}[Proof of \cref*{thm:moment-method}]
  Let~${(n_p)}_{p\ge1}$ be an increasing sequence of integers
  tending to $\infty$. By \cref{lem:fewcycles}, the
  convergence
  \begin{align*}
    \frac1{n_p}\,c_k\left(\RG>[d,n_p]\right)
    &\conv[p]0
  \intertext{holds in expectation, and thus
  also in probability. A classical application
  of the Borel--Cantelli lemma shows that
  it further holds almost surely along a subsequence:
  there exists ${(n'_p)}_{p\ge1}\subseteq{(n_p)}_{p\ge1}$
  such that}
    \frac1{n'_p}\,c_k\left(\RG>[d,n'_p]\right)
    &\conv[p]0
  \intertext{almost surely. Then \cref{thm:criterion} entails that}
    \frac1{n'_p}\Tr{\RGA>[d,n'_p]^w}
    &\conv[p]
    \MM(w)
  \intertext{holds almost surely.
  Since $\Tr{\RGA>[d,n'_p]^w}\le n'_p\,d^k$,
  the dominated convergence theorem then yields}
  \frac1{n'_p}\EE{\Tr{\RGA>[d,n'_p]^w}}
    &\conv[p]
    \MM(w).
\end{align*}
We have just showed that every subsequence
of $\frac1n\EE{\Tr{\RGA>^w}},\,n\ge1,$ admits a further subsequence
converging to~$\MM(w)$, so \cref{thm:moment-method} is proved.
\end{proof}

\subsection{Combinatorial formula for~$\MM(w)$.}
Before establishing \cref{thm:starmoments},
let us warm up with a simple necessary condition
for the set~$\GP(w)$ of $w$-paths from~$\rv$ to~$\rv$
to be non-empty.
\begin{lemma}\label{lem:balanced}
  If~$w:=w_1\cdots w_k$ is a word on~$\Sigma$ such that $\GP(w)\neq\emptyset$,
  then~$w$ is balanced: $w\in{\{1{*},{*}1\}}^p$ with~$k=2p$.
\end{lemma}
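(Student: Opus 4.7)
The plan is to exploit the free-group structure of~$F_d$ via the
exponent-sum homomorphism $\phi\colon F_d\to\ZZ$ characterized by
$\phi(\gen_i)=1$ for every $i\in[d]$ (so automatically
$\phi(\gen_i^{-1})=-1$). Recall from the introduction that any
$(i_1,\ldots,i_k)\in\GP(w)$ is by definition a tuple satisfying
$\gen_{i_1}^{w_1}\cdots\gen_{i_k}^{w_k}=\rv$ (the identity) in~$F_d$;
applying $\phi$ to both sides of this equation then yields
\[
  0
  \;=\;\sum_{j=1}^k\phi\bigl(\gen_{i_j}^{w_j}\bigr)
  \;=\;\#\{j:w_j=1\}-\#\{j:w_j={*}\},
\]
so the numbers of $1$'s and of ${*}$'s in~$w$ must coincide, giving
$k=2p$ and $w\in\{1{*},{*}1\}^p$ as claimed.

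A more geometric alternative---closer in spirit to the tree-based
reasoning underlying the proof of~\cref{thm:criterion}---proceeds by
induction on~$k$. Since~$\RT>$ is a tree, any closed walk from~$\rv$
to~$\rv$ with $k\ge2$ must backtrack, so there exists
$j\in\{1,\ldots,k-1\}$ with $v_{j-1}=v_{j+1}$; this forces
$i_j=i_{j+1}$ together with $w_j\neq w_{j+1}$, i.e.\ $w_jw_{j+1}\in\{1{*},{*}1\}$.
Excising the pair $(j,j+1)$ from the tuple produces a shorter
$w'$-path from~$\rv$ to~$\rv$ whose word~$w'$ has one fewer $1$ and
one fewer ${*}$ than~$w$, and the inductive hypothesis closes the loop.

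No substantial obstacle arises here: the statement is a direct
structural consequence of the freeness of~$F_d$ (equivalently, the
acyclicity of~$\RT>$), and it serves mainly as a quick sanity check
before entering the combinatorial analysis leading
to~\cref{thm:starmoments}.
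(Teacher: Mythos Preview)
Your first argument via the exponent-sum homomorphism $\phi\colon F_d\to\ZZ$ is correct and noticeably slicker than what the paper does. The paper instead argues by induction on~$k$, decomposing a $w$-path according to its \emph{first return time}~$r$ to the root: one checks that $w_1\neq w_r$, writes $w=w_1\,u\,w_r\,v$ with $u:=w_2\cdots w_{r-1}$ and $v:=w_{r+1}\cdots w_k$, and applies the induction hypothesis to the shorter words~$u$ and~$v$. Your second argument is close to this in spirit, though it excises an arbitrary backtrack rather than peeling off the outermost first-return layer.

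What the paper's route buys is not brevity but foreshadowing: the first-return decomposition~\eqref{eq:decomposition} obtained in that proof is precisely the recursion reused immediately afterwards to define the skeleton map~$\skel$ and to establish \cref{lem:decomposition}. Your homomorphism argument, while entirely sufficient for the lemma on its own, does not set up that machinery; if you adopt it, you will still need to introduce the first-return decomposition separately when you come to define~$\skel$.
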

\begin{proof}
  We proceed by induction on~$k$.
  The lemma holds trivially if $k=0$.
  Suppose $k\neq 0$. By assumption, there exists a $w$-path
  $\mathbf p:=(i_1,\ldots,i_k)$
  from~$v_0:=\rv$ to itself:
  \[\mathbf p\enspace=\enspace v_0\stackrel{w_1}\Dash v_1
  \stackrel{w_2}\Dash\cdots\stackrel{w_k}\Dash v_k=v_0,\]
  with $v_j:=\gen_{i_1}^{w_1}\cdots\gen_{i_j}^{w_j},\,
  0\le j\le k$, where we recall that
  the generators~$\gen_1,\ldots,\gen_d$
  have no relations.
  In particular $v_1\neq v_0$ (because $e_{i_1}\neq\rv$), and
  thus $k\ge2$. Let $2\le r\le k$ be the smallest index
  for which~$v_r=v_0$, so
  \[\rv=v_0=v_r:=\gen_{i_1}^{w_1}\left(\gen_{i_2}^{w_2}\cdots\gen_{i_{r-1}}^{w_{r-1}}\right)\gen_{i_r}^{w_r},
  \qquad\text{that is,}\quad
  \gen_{i_r}^{w_r}\gen_{i_1}^{w_1}={\left(\gen_{i_2}^{w_2}\cdots\gen_{i_{r-1}}^{w_{r-1}}\right)}^{-1}.\]
  Since the generators have no relations,
  this forces
  $i_1=i_r$ and $w_1\neq w_r$ (in other words,
  $\RT>$ has no cycle so
  the arc taken in
  $v_0\stackrel{w_1}\Dash v_1$ must match the one
  in $v_{r-1}\stackrel{w_r}\Dash v_r=v_0$).
  Thus~$w$ is of the form
  $w=1u{*}v$ or $w={*}u1v$ where $u=w_2\cdots w_{r-1}$
  and $v:=w_{r+1}\cdots v_k$, and
  \begin{equation}
    \mathbf p\enspace=\enspace v_0\stackrel{w_1}\Dash\underbrace{v_1%
    \stackrel{w_2}\Dash\cdots\stackrel{w_{r-1}}\Dash v_{r-1}}_{\text{$u$-path}}
    \stackrel{w_r}\Dash v_r=
    \underbrace{v_0\stackrel{w_{r+1}}\Dash v_{r+1}%
    \stackrel{w_{r+2}}\Dash\cdots\stackrel{w_k}\Dash v_k}_{\text{$v$-path}}
    =v_0.\label{eq:decomposition}
  \end{equation}
  By induction,
  the smaller words~$u$ and~$v$ are balanced, and
  thus~$w$ is also balanced.
\end{proof}

A consequence of \cref{lem:balanced} is
that $\MM(w)=0$ if~$w$ is not balanced,
so \cref{thm:starmoments}
is proved for such a word since then~$\ANC(w)=\emptyset$.
Note also that \cref{thm:starmoments} holds
if~$w$ is the empty word~$\ew$, because $\ANC(\ew):=\{\emptyset\}$
is reduced to the empty partition and~$\GP(\ew):=\{\ep\}$
is reduced to the empty path.
We henceforth assume~$w:=w_1\cdots w_{2p}$ non-empty
and balanced.
The decomposition~\eqref{eq:decomposition} of
a $w$-path (from~$\rv$ to~$\rv$)
with respect to its first return time
to the origin is clearly unambiguous.
Putting aside the choice of vertices along the path,
this gives rise to a ``skeleton''
which as we now claim can be encoded as a
certain partition~$\pi\in\ANC(w)$
whose every block~$V\in\pi$ has cardinal $|V|=2$;
we write $\pi\in\ANC_2(w)$
and call it an \emph{alternating non-crossing pair partition}
of~$w$.
Specifically, 
let $\mathbf p:=(i_1,\ldots,i_{2p})\in\GP(w)$
and denote by $r\in\{2,\ldots,2p\}$
its first return time to~$\rv$, that is
$v_j:=\gen_{i_1}^{w_1}\cdots\gen_{i_j}^{w_j}\neq\rv$
for every $1\le j<r$, and $v_r=\rv$. Then
the skeleton of~$\mathbf p$ is defined inductively
as
\[
    \skel(\mathbf p):=\Bigl\{\{1,r\}\Bigr\}
      \;\scalebox{1.25}{$\cup$}\;
      \Bigl\{V+1:V\in\skel[u](i_2,\ldots,i_{r-1})\Bigr\}
      \;\scalebox{1.25}{$\cup$}\;
      \Bigl\{V+r:V\in\skel[v](i_{r+1},\ldots,i_{2p})\Bigr\},
\]
where $u:=w_2\cdots w_{r-1}$
and $v:=w_{r+1}\cdots w_{2p}$,
with the base case $\skel[\ew](\ep):=\emptyset$ for the unique $\ew$-path~$\ep$.
Essentially, a block $V:=\{j<k\}$ in~$\skel(\mathbf p)$
means that $v_j,\ldots,v_{k-1}\neq v_{j-1}=v_k$,
i.e.,~$k$ is the first return time to the vertex visited
at time~$j-1$.

Conversely, given
an alternating non-crossing \emph{pair}
partition~$\pi\in\ANC_2(w)$
of~$w$,
what is $\skel^{-1}\{\pi\}$,
the subset of $w$-paths~$\mathbf p:=(i_1,\ldots,i_{2p})\in\GP(w)$
with skeleton~$\skel(\mathbf p)=\pi$?
Clearly, since each block $V:=\{j<k\}\in\pi$ indicates
a segment of the path where it exits and first returns
to the vertex~$v_{j-1}$, we must have $i_j=i_k$,
i.e.,
the arc taken at time~$j$ to exit $v_{j-1}$
must be taken again at time~$k$ (but ``backwards'',
since $w_j\neq w_k$) in order to return to~$v_{j-1}$.
This condition alone does of course not
prevent a premature return
$v_{j'}=v_{j-1}$ for some $j'\in\{j+1,\ldots,k-1\}$.
A premature return at time~$j'$ can however only happen
if
\begin{enumerate}[label=(\roman*),wide]
    \item $w_{j'}\neq w_j$ (the arc at time~$j'$ must be taken in the opposite direction as when exiting~$v_{j-1}$), and
    \item $j'$ is the lower element in its block, $U:=\{j'<k'\}\in\pi$,
    which is directly surrounded by~$V$: $j<j'<k'<k$ and there is no other block $\{j''<k''\}\in\pi$
with $j<j''<j'<k'<k''<k$.
\end{enumerate}
In case~(i) and~(ii) hold,
we write $j\bad_\pi j'$ as well as $U\in B(\pi)$,
and say that~$j,j'$ form a
\emph{bad pair} and that~$U$ is a \emph{bad block}.
See \cref{fig:pathskel} for an illustration.
% Figures fig:path, fig:skel
\begin{figure}\centering
\begin{subfigure}[b]{.4\textwidth}\centering%
\includegraphics{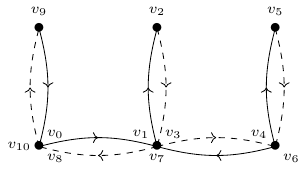}
\subcaption{$\mathbf p=(i,j,j,k,\ell,\ell,k,i,m,m),\,k\notin\{i,\ell\}.$\\
(Arcs in~$\RT>^*$
are drawn with a dashed line.)}\label{fig:path}
\end{subfigure}\hfill%
\begin{subfigure}[b]{.6\textwidth}\centering%
\includegraphics{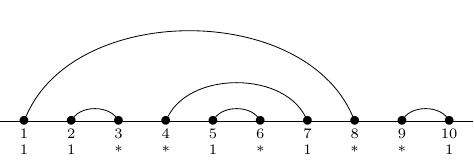}
\subcaption{$\pi=\bigl\{\{1,8\},\{2,3\},\{4,7\},\{5,6\},\{9,10\}\bigr\}$,\\with $\bad_\pi=\{(1,4),(4,5)\}$
and $B(\pi)=\bigl\{\{4,7\},\{5,6\}\bigr\}$.}\label{fig:skel}
\end{subfigure}
\caption{(\textsc{a}) A $w$-path~$\mathbf p$ and (\textsc{b}) its skeleton~$\pi:=\skel(\mathbf p)$,
for~$w:=11{*}{*}1{*}1{*}{*}1$.}\label{fig:pathskel}
\end{figure}

Summarizing, for a $w$-path $(i_1,\ldots,i_{2p})$
to have skeleton~$\pi$,
we must have $i_j\neq i_{j'}$ if~$j,j'$ form a bad pair (i.e., $j\bad_\pi j'$),
and $i_j=i_{j'}$ if~$j,j'$ belong to the same block ($j\sim_\pi j'$).
It should be clear that these requirements are also sufficient:
\begin{lemma}\label{lem:decomposition}
  The map~$\skel:\GP(w)\to\ANC_2(w)$ is surjective: for every $\pi\in\ANC_2(w)$,
    \begin{align}
      \skel^{-1}\{\pi\}
      &=\left\{(i_1,\ldots,i_{2p})\in{[d]}^{2p}\enspace\middle|\enspace
      \forall(j,j')\in{[2p]}^2,\:
      \biggl\{
        \begin{aligned}
          j\sim_\pi{}&j'\implies i_j=i_{j'}\\[-.5em]
          j\bad_\pi{}&j'\implies i_j\neq i_{j'}
        \end{aligned}\right\}.\label{eq:preimage}
      \intertext{Furthermore,}
      \left|\skel^{-1}\{\pi\}\right|
      &=\prod_{V\in\pi}\left(d-\II_{\{V\in B(\pi)\}}\right).\label{eq:cardinal}
    \end{align}
\end{lemma}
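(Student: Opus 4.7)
The plan is to establish~\eqref{eq:preimage} by induction on~$p$, and then to deduce~\eqref{eq:cardinal} from the resulting combinatorial description. The recursion mirrors~\eqref{eq:decomposition}: given a non-empty $\pi\in\ANC_2(w)$, the block $\{1,r\}\in\pi$ containing the index~$1$ splits~$\pi$ into this outer pair together with two smaller alternating non-crossing pair partitions $\pi_u\in\ANC_2(u)$ and $\pi_v\in\ANC_2(v)$, where $u:=w_2\cdots w_{r-1}$ and $v:=w_{r+1}\cdots w_{2p}$. The base case $p=0$ is trivial.

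For the forward inclusion of~\eqref{eq:preimage}, suppose $\mathbf p:=(i_1,\ldots,i_{2p})$ has skeleton~$\pi$. By construction, the subtuples $(i_2,\ldots,i_{r-1})$ and $(i_{r+1},\ldots,i_{2p})$ are $u$- and $v$-paths with skeletons $\pi_u$ and $\pi_v$, so the induction hypothesis yields the same-block and bad-pair conditions within each half. It remains to treat the outer block $\{1,r\}$, for which the closure $v_r=v_0=\rv$ together with the argument of \cref{lem:balanced} forces $i_1=i_r$; and any bad pair of the form $(1,j')$, with~$j'$ the opening of a block directly contained in~$\{1,r\}$: the equality $i_1=i_{j'}$ would force $v_{j'}=v_0=\rv$, contradicting the minimality of~$r$ as the first return time.

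Conversely, given any tuple satisfying the right-hand side of~\eqref{eq:preimage}, the induction hypothesis applied to the sub-tuples produces $u$- and $v$-paths with the prescribed skeletons. Prepending $\gen_{i_1}^{w_1}$ and inserting $\gen_{i_r}^{w_r}$ then yields a $w$-path from~$\rv$ to~$\rv$ whose first return time is exactly~$r$, because the bad-pair inequalities involving index~$1$ forbid any premature return to~$\rv$ at times $1\le j<r$; the resulting skeleton therefore coincides with~$\pi$. For the cardinality formula~\eqref{eq:cardinal}, the characterization~\eqref{eq:preimage} reduces the enumeration to choosing a single label $i_V\in[d]$ per block $V\in\pi$, subject only to $i_V\neq i_{V'}$ whenever $V\in B(\pi)$ and $V'$ is its directly surrounding block. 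Since an outermost block can never be bad, processing the blocks from outermost to innermost leaves $d$ free choices at each non-bad block and $d-1$ at each bad one, which immediately yields the announced product.

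The main obstacle is the bookkeeping of bad pairs under the recursive decomposition. Bad pairs internal to~$u$ or~$v$ arise directly as bad pairs of $\pi_u$ or $\pi_v$, whereas bad pairs of the form $(1,j')$, with~$j'$ the opening of a block directly contained in~$\{1,r\}$, have no counterpart in the subproblems and must be handled at the outer level. Verifying that this accounting exhausts all bad pairs of~$\pi$, and that the non-crossing property of~$\pi$ makes the notion of \emph{directly surrounding block} unambiguous, is where care is required.
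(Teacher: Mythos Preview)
Your proposal is correct and follows essentially the same approach as the paper: both argue by induction on~$p$, decompose~$\pi$ via the block~$\{1,r\}$ into $\pi_u$ and $\pi_v$, and observe that the bad pairs of~$\pi$ split into those internal to~$\pi_u$ or~$\pi_v$ together with the pairs $(1,j')$ coming from outermost blocks of~$\pi_u$ with $w_{j'}\neq w_1$. The only cosmetic differences are that the paper derives~\eqref{eq:cardinal} before~\eqref{eq:preimage} and packages the inductive step as a single chain of equivalences (making the set~$J$ of such~$j'$ explicit), whereas you treat the two inclusions separately and defer the bad-pair bookkeeping to a closing remark.
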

\begin{proof}
  First, the expression given for the cardinal~\eqref{eq:cardinal}
  is always positive because $d\ge2$,
  and is easily derived from~\eqref{eq:preimage}:
  for each block~$V:=\{j<k\}\in\pi$,
  there are~$d$ degrees
  of freedom for the choice of $i_j=i_k\in[d]$,
  except if~$V$ is a bad block, in which case there
  is one degree of freedom less (because $i_j=i_k$ must
  be different from~$i_{j'}$, where $j\bad_\pi j'$).
  It remains to prove~\eqref{eq:preimage},
  which we do by induction on the balanced word
  $w:=w_1\cdots w_{2p}$.
  There is nothing to prove if~$p=0$. Suppose $p\ge1$ and
  consider the decomposition of~$\pi$ with respect to the block containing~$1$,
  \[
    \pi:=\bigl\{\{1,r\}\bigr\}\cup\bigl\{V+1:V\in\pi^{(u)}\bigr\}\cup\bigl\{V+r:V\in\pi^{(v)}\bigl\},
  \]
  where $u:=w_2\cdots w_{r-1}$, $v:=w_{r+1}\cdots w_{2p}$, and
  \begin{alignat*}{3}
    &\pi^{(u)}&&:=\Bigl\{V-1:V\in\pi,\ V\subseteq\{2,\ldots,r-1\}\Bigr\}&&\in\ANC_2(u),\\[.4em]
    &\pi^{(v)}&&:=\Bigl\{V-r:V\in\pi,\ V\subseteq\{r+1,\ldots,2p\}\Bigr\}&&\in\ANC_2(v).
  \end{alignat*}
  For $j\in[r-2]$, write $j\in J$
  if $w_{j+1}\neq w_1$
  and the block~$\{j<k\}\in\pi^{(u)}$ containing~$j$
  in~$\pi^{(u)}$ is not surrounded by any other block
  (i.e., there is no $\{j'<k'\}\in\pi^{(u)}$
  with $j'<j<k<k'$).
  Because of the previous decomposition
  and the definition of~$\bad_\pi$, we then have
  \[\bad_\pi=\bigl\{(1,j+1):j\in J\bigr\}\cup\bigl\{(j+1,j'+1):j\bad_{\pi^{(u)}}j'\bigr\}
  \cup\bigl\{(j+r,j'+r):j\bad_{\pi^{(v)}}j'\bigr\}.\]
  Recall also that~$\{1,r\}\in\pi$ indicates that the
  $w$-paths
  with skeleton~$\pi$ first return to~$\rv$ at time~$r$.
  Thus
  \begin{align*}
    (i_1,\ldots,i_{2p})\in\skel^{-1}\{\pi\}
    &\iff
      \left\{
      \begin{aligned}
        &i_1=i_r,\ \forall j\in J,\ i_{j+1}\neq i_1,\\[-.5em]
        &(i_2,\ldots,i_{r-1})\in\skel[u]^{-1}\bigl(\pi^{(u)}\bigr),\ 
        (i_{r+1},\ldots,i_{2p})\in\skel[v]^{-1}\bigl(\pi^{(v)}\bigr),
      \end{aligned}\right.
    \intertext{and, by the induction hypothesis,}
    (i_1,\ldots,i_{2p})\in\skel^{-1}\{\pi\}
    &\iff\left\{
    \begin{aligned}    
    &i_1=i_r,\ \forall j\in J,\ i_{j+1}\neq i_1,\\[-.5em]
    &\forall(j,j')\in{\{2,\ldots,r-1\}}^2,\ 
    \biggl\{
    \begin{aligned}
        (j-1)\sim_{\pi^{(u)}}{}&(j'-1)\implies i_j=i_{j'},\\[-.5em]
        (j-1)\bad_{\pi^{(u)}}{}&(j'-1)\implies i_j\neq i_{j'},
    \end{aligned}\\
  &\forall(j,j')\in{\{r+1,\ldots,2p\}}^2,\ 
    \biggl\{
    \begin{aligned}
        (j-r)\sim_{\pi^{(v)}}{}&(j'-r)\implies i_j=i_{j'},\\[-.5em]
        (j-r)\bad_{\pi^{(v)}}{}&(j'-r)\implies i_j\neq i_{j'},
    \end{aligned}
  \end{aligned}\right.\\[.4em]
  &\iff\forall(j,j')\in{[2p]}^2,\:
  \biggl\{
    \begin{aligned}
      j\sim_\pi{}&j'\implies i_j=i_{j'},\\[-.5em]
      j\bad_\pi{}&j'\implies i_j\neq i_{j'}.
    \end{aligned}\qedhere%\\[.4em]
 \end{align*}
\end{proof}
We can now complete the proof of \cref{thm:starmoments}.
\begin{proof}[Proof of \cref*{thm:starmoments}]
Let $w:=w_1\cdots w_{2p}$ be a balanced word on~$\Sigma$.
It follows from \cref{lem:decomposition} that the set~$\GP(w)$ of $w$-paths
from~$\rv$ to~$\rv$ may be partitioned with respect to their skeleton as
\begin{align*}
  \GP(w)\quad&=\bigsqcup_{\pi\in\ANC_2(w)}\skel^{-1}\{\pi\},
\intertext{and passing to the cardinal, we get}
  \MM(w)\quad
  &=\sum_{\pi\in\ANC_2(w)}\prod_{V\in\pi}\left(d-\II_{\{V\in B(\pi)\}}\right)\quad
  =\sum_{\pi\in\ANC_2(w)}
\sum_{A\subseteq B(\pi)}{(-1)}^{|A|}\,d^{|\pi|-|A|},
\intertext{%
by expanding out the product%
\footnotemark.
Now, given~$\pi\in\ANC_2(w)$ and~$A\subseteq B(\pi)$,
we construct a coarser partition~$\pi':=\gamma(\pi,A)$
from~$\pi$ by merging, for each bad pair~$(j,j')\in A$,
the block containing~$j'$ into its surrounding
block (the one containing~$j$). In other words,
$\sim_{\pi'}$ is the smallest equivalence relation on~$[2p]$
containing ${\sim_{\pi}}\cup A$.
For instance, if~$\pi$ is the partition of~\cref{fig:skel}
and~$A:=\{(4,5)\}$,
then $\pi':=\gamma(\pi,A)=\bigl\{\{1,8\},\{2,3\},\{4,5,6,7\},\{9,10\}\bigr\}$.
It is clear that the conditions~(i) and~(ii) of forming
a bad pair guarantee that~$\pi'$ remains
non-crossing
and
alternating w.r.t.~$w$:
$\pi'\in\ANC(w)$. Further, $\pi'$ has
exactly~$|A|$ fewer blocks than~$\pi$,
which has~$p$ blocks,
so ${(-1)}^{|A|}\,d^{|\pi|-|A|}={(-1)}^{p-|\pi'|}\,d^{|\pi'|}$.
Conversely, given~$\pi'\in\ANC(w)$,
any pair partition~$\pi$ which is \emph{finer} than~$\pi'$
(i.e., ${\sim_\pi}\subseteq{\sim_{\pi'}}$)
automatically leads to
an alternating non-crossing pair partition
$\pi\in\ANC_2(w)$ of~$w$
having a certain set of bad pairs.
Therefore,}%
\noalign{\footnotetext{At the level of sets, this amounts to writing
\[
  \skel^{-1}\{\pi\}=
  \Bigl\{(i_1,\ldots,i_{2p})\in{[d]}^{2p}:
  \forall(j,j')\in{[d]}^{2p},\ j\sim_\pi j'\implies i_{j}=i_{j'}
  \Bigr\}\;
  \scalebox{1.5}{$\setminus$}\bigcup_{j\bad_\pi j'}%
  \Bigl\{(i_1,\ldots,i_{2p})\in{[d]}^{2p}:
  i_j=i_j'
  \Bigr\}
\]
and using the inclusion-exclusion formula.}}
  \MM(w)\quad
  &=\sum_{\pi'\in\ANC(w)}{(-1)}^{p-|\pi'|}\,d^{|\pi'|}
  \sum_{\substack{\pi\in\ANC_2(w)\\\pi\finer\pi'}}\:
  \sum_{\substack{A\subseteq B(\pi)\\\gamma(\pi,A)=\pi'}}1,
\end{align*}
where we wrote $\pi\finer\pi'$
for ${\sim_\pi}\subseteq{\sim_{\pi'}}$.
Since
\[{(-1)}^{p-|\pi'|}\:
=\prod_{V\in\pi'}{(-1)}^{\frac{|V|}2-1},\]
it remains to observe that
\begin{equation}
  \sum_{\substack{\pi\in\ANC_2(w)\\\pi\finer\pi'}}\:
  \sum_{\substack{A\subseteq B(\pi)\\\gamma(\pi,A)=\pi'}}1
\quad=\quad\prod_{V\in\pi'}C_{\frac{|V|}2-1}
\label{eq:product-catalan}
\end{equation}
to conclude. But
constructing~$\pi\in\ANC_2(w)$ such that
$\pi\finer\pi'$
and $\gamma(\pi,A)=\pi'$ for some~$A\subseteq B(\pi)$
is equivalent
to partitioning each block~$V:=\{i_1,\ldots,i_{2m}\}\in\pi'$
using an alternating pair partition of~$w|_V$
containing the block~$\{1,2m\}$.
Since~$w|_V$ is already alternating (because $\pi'\in\ANC(w)$),
this amounts to choosing a non-crossing pair partition
of~$\{2,\ldots,2m-1\}$,
i.e., an element of~$\NC_2(2m-2)$.
Then~\eqref{eq:product-catalan}
follows from
the well-known fact $|\NC_2(2m-2)|=C_{m-1}$,
see~\cite[Exercise~61]{stanley2015catalan}.
\end{proof}

\section{Free probability and the oriented Kesten--McKay conjecture}\label{sec:alternativeproof}
\subsection{Free probability.}
Let us start this concluding section by showing
how \cref{thm:moment-method,thm:starmoments}
may be recovered from Nica's work~\cite{nica1993asymptotically}.
Free probability is a vast field initiated by Voiculescu;
we only introduce the bare minimum and
refer to~\cite{nica2006lectures}
for detail.
The general framework is that
of non-commutative variables $x,y,\ldots$ in
some unital algebra~$\fA$ endowed with an adjoint operator~$^*$
and a linear form $\varphi:\fA\to\CC$
such that $\varphi(1)=1$ and $\varphi(x^*x)\ge0$
for all $x\in\fA$. The pair $(\cA,\varphi)$
is called a \emph{non-commutative probability space},
where the \emph{state}~$\varphi$ plays the
rôle of an expectation. The \emph{distribution}
of~$x$ or more generally, the (joint) distribution
of $(x_1,\ldots,x_k)$ is given by all mixed moments
$\varphi(x_{i_1}\cdots x_{i_\ell})$ for $\ell\ge1$
and $(i_1,\ldots,i_\ell)\in{[k]}^\ell$,
which themselves may be expressed
through the moment-cumulant formula
\cite[Lecture~11]{nica2006lectures}:
\begin{equation}
  \varphi(x_1\cdots x_k)\enspace=
  \sum_{\pi\in\NC(k)}\prod_{\substack{V\in\pi\\V:=\{i_1<\cdots<i_\ell\}}}\kappa_\ell(x_{i_1},\ldots,x_{i_\ell}),
\label{eq:moment-cumulant}
\end{equation}
where the \emph{free cumulants}~$\kappa_\ell:\fA^\ell\to\CC,\,\ell\ge1,$
are defined inductively so that~\eqref{eq:moment-cumulant}
holds for any $k\ge1$
and any non-commutative variables $x_1,\ldots,x_k\in\fA$.
Similarly to the log-Laplace transform
of classical random variables,
the free cumulants of $(x_1,\ldots,x_k)$
may be gathered into the so called \emph{$R$-transform}
\cite[Lecture~16]{nica2006lectures}:
\begin{equation}
R_{(x_1,\ldots,x_k)}(z_1,\ldots,z_k)
:=\sum_{\ell=1}^\infty
\sum_{i_1,\ldots,i_\ell\in[k]}\kappa_\ell(x_{i_1},\ldots,x_{i_\ell})\,
z_{i_1}\cdots z_{i_\ell},\label{eq:r-transform}
\end{equation}
which is a formal series in non-commutative indeterminates $z_1,\ldots,z_k$.
Analogously to independence for classical random variables,
$x_1,\ldots,x_k$ are \emph{free}
if
$R_{(x_1,\ldots,x_k)}(z_1,\ldots,z_k)=R_{x_1}(z_1)+\cdots+R_{x_k}(z_k)$,
which is commonly phrased by the sentence ``mixed cumulants vanish''
(i.e., $\kappa_\ell(x_{i_1},\ldots,x_{i_\ell})=0$
for every $\ell\ge1$
and every non-constant sequence $(i_1,\ldots,i_\ell)\in{[k]}^\ell$).

Nica~\cite{nica1993asymptotically}
showed that for
$P_{1,n},\ldots,P_{d,n}\in\{0,1\}^{n\times n}$
uniform, independently chosen
permutation matrices,
there exists a non-commutative
probability space~$(\fA,\varphi)$
and free variables $u_1,\ldots,u_d\in\fA$
such that:
\begin{enumerate}[label=(\alph*), wide]
\item There is the convergence of
mixed moments
\[
  \frac1n
  \EE{\Tr{P_{i_1,n}^{w_1}\cdots P_{i_k,n}^{w_k}}}
  \conv
  \varphi\left(u_{i_1}^{w_1}\cdots u_{i_k}^{w_k}\right),
\]
for every $k\ge1$, every $(i_1,\ldots,i_k)\in{[d]}^k$,
and every word $w\in\Sigma^k$.
\item The~$u_i$'s are \emph{Haar unitaries},
in the sense that $u_i^*u_i^{\vphantom*}=u_i^{\vphantom*}u_i^*=1$
and $\varphi(u_i^k)=0$ for every $k\ge1$.
\end{enumerate}
It follows from~(a) and linearity that
the star moments of
$\CMA:=P_{1,n}+\cdots+P_{d,n}$
converge to those of
$a_d:=u_1+\cdots+u_d$, and it is easy to see that~$\CMA$
is distributed like the adjacency matrix of the
configuration model~$\CM$ introduced
in the proof of \cref{thm:moment-method}: then,
we may again condition on~$\CM$ being simple
($\CMA(i,i)=0$ and $\CMA(i,j)\le1$ for all $i\neq j\in[n]$)
to deduce
the star-moment
convergence,
for every word~$w$ on~$\Sigma$,
\[\frac1n\EE{\Tr{\RGA>^w}}
=\frac1n\EE\left[\Tr{\CMA^w}\mid\text{$\CM$ is simple}\right]
\conv
\varphi(a_d^w),\]
of the uniform $d$-regular digraph~$\RG>$
with adjacency matrix~$\RGA>$.

Finally, we check that the star moments~$\varphi(a_d^w)$
coincide with the number~$\MM(w)$ of~$w$-paths in~$\RT>$.
Using~(b), it was derived in~\cite{nica1997diagonal}
that (for every $i\in[d]$)
\[R_{u_i^{\vphantom*},u_i^*}(z_1,z_2)=\sum_{k=1}^\infty
{(-1)}^{k-1}\,C_{k-1}\left[{(z_1z_2)}^k+{(z_2z_1)}^k\right].\]
By freeness, $R_{a,a^*}(z_1,z_2)=R_{u_1^{\vphantom*},u_1^*}(z_1,z_2)
+\cdots+R_{u_d^{\vphantom*},u_d^*}(z_1,z_2)=d\,R_{u_1^{\vphantom*},u_1^*}(z_1,z_2)$,
and the structure of this $R$-transform
shows that the free cumulants
$\kappa_\ell(a_d^{w_1},\ldots,a_d^{w_\ell})$
(which we recover from~\eqref{eq:r-transform})
vanish if $w:=w_1\cdots w_\ell$ is not alternating:
\[\kappa_\ell(a_d^{w_1},\ldots,a_d^{w_\ell})
=\begin{cases}
  d\,{(-1)}^{p-1}\,C_{p-1},&\text{if $w$ is alternating: $w={(1{*})}^p$ or $w={({*}1)}^p$},\\[.4em]
  0,&\text{otherwise}.
\end{cases}\]
Recalling the definition of~$\ANC(w)$,
the moment-cumulant formula~\eqref{eq:moment-cumulant}
then easily yields
\[\varphi(a_d^w)
\:\:\:=\sum_{\pi\in\ANC(w)}
      \left(\prod_{V\in\pi}{(-1)}^{\frac{|V|}2-1}\,%
      C_{\frac{|V|}2-1}\right)d^{|\pi|},
\]
as in \cref{thm:starmoments}.

\subsection{The oriented Kesten--McKay conjecture.}
\cref{thm:moment-method}
states that
the uniform $d$-regular digraph~$\RG>$
converges in star moments
to the $d$-regular directed tree~$\RT>$.
As we saw in the previous section,
the star moments of~$\RT>$
agree with those of
the sum~$a_d:=u_1+\cdots+u_d$ of~$d$ free Haar unitary
elements
in some non-commutative probability space~$(\fA,\varphi)$.
This implies the convergence of mean empirical
\emph{singular value} distributions:
for every $z\in\CC$ and every
continuous bounded function~$f$,
\begin{align}
  \frac1n\EE{\Tr{f\left(\sqrt{{\bigl(\RGA>-z\mathrm I_n\bigr)}^*\bigl(\RGA>-z\mathrm I_n\bigr)}\right)}}
  &\conv\varphi\left(f\biggl(\sqrt{{(a_d-z1)}^*(a_d-z1)}\biggr)\right),\notag
\intertext{that is,}
  \int f(t)\,\mu_{|\RGA>-z|}(\dd t)
  &\conv\int f(t)\,\mu_{|a_d-z|}(\dd t),\label{eq:esvd-conv}
\end{align}
where~$X-z$ means that we subtract~$z$ times
the identity element to~$X$, and
$\mu_{|X|}$ is the spectral measure
of the positive operator
$|X|:=\sqrt{XX^*}$
(i.e., $\mu_{|X|}$ is the unique real probability measure
having the same moments as~$|X|$,
as given by the Riesz--Markov--Kakutani theorem).

Although~$X\in\{\RGA>,a_d\}$ is not a normal element,
there still exists~\cite{haagerup2000brown} a
unique probability
measure~$\mu_X$
(on~$\CC$),
known as the Brown measure of~$X$,
such that
\[\int\log{|z-\lambda|}\,\mu_X(\dd\lambda)=
\int\log(t)\,\mu_{|X-z|}(\dd t)\]
for every $z\in\CC$.
When $X=\RGA>$, $\mu_X$ is nothing but the
ESD $\frac1n\sum_{i=1}^n\delta_{\lambda_i(\RGA>)}$
of~$\RG>$. Since the star moments of~$X$
determine $(\mu_{|X-z|})_{z\in\CC}$
and thus~$\mu_X$,
and the star moments of~$a_d$ and~$\RT>$
coincide, we can also view~$\mu_{a_d}$
as the spectral measure of~$\RT>$.
However, we cannot directly use~\eqref{eq:esvd-conv}
to show
\begin{equation}
  \frac1n\EE{\sum_{i=1}^nf\bigl(\lambda_i(\RGA>)\bigr)}
\conv\int_{\CC}f(z)\,\mu_{a_d}(\dd z)
\label{eq:okm-conjecture}
\end{equation}
because the logarithm is not a bounded function.
There still lacks a uniform control on the smallest
singular value of~$\RGA>-z$ to validate
the oriented Kesten--McKay conjecture~\eqref{eq:okm-conjecture},
see \cite[Lemma~4.3]{bordenave2012around}.

%\printbibliography
%\bibliographystyle{amsplain}
%\bibliography{references}
\begingroup
\providecommand{\bysame}{\leavevmode\hbox to3em{\hrulefill}\thinspace}
\renewcommand{\MR}[1]{\relax\ifhmode\unskip\space\fi{\scshape\href{%
http://www.ams.org/mathscinet-getitem?mr=#1}{mr\,\text{\tiny#1}}}}
% \MRhref is called by the amsart/book/proc definition of \MR.
\providecommand{\MRhref}[2]{%
  \href{http://www.ams.org/mathscinet-getitem?mr=#1}{#2}
}
\providecommand{\href}[2]{#2}

\endgroup

\end{document}